\DeclareFontFamily{OT1}{pzc}{}
\DeclareFontShape{OT1}{pzc}{m}{it}%
             {<-> s * [0.900] pzcmi7t}{}
\DeclareMathAlphabet{\mathscr}{OT1}{pzc}%
                                 {m}{it}
\newcommand{\bburl}[1]{\textcolor{blue}{\url{#1}}}
\newcommand{\ol}[1]{\overline{#1}}
\numberwithin{equation}{section}
\newtheorem{thm}{Theorem}[section]
\newtheorem{que}[thm]{Question}
\newcommand\be{\begin{equation}}
\newcommand\ee{\end{equation}}
\newcommand\bea{\begin{eqnarray}}
\newcommand\eea{\end{eqnarray}}
\newcommand\bi{\begin{itemize}}
\newcommand\ei{\end{itemize}}
\newcommand\ben{\begin{enumerate}}
\newcommand\een{\end{enumerate}}
\newcommand\bc{\begin{center}}
\newcommand\ec{\end{center}}
\newcommand\ba{\begin{array}}
\newcommand\ea{\end{array}}
\newcommand{\N}{\mathbb{N}}
\newcommand\frakfamily{\usefont{U}{yfrak}{m}{n}}
\DeclareTextFontCommand{\textfrak}{\frakfamily}
\numberwithin{subsubsection}{subsection}
\begin{document}

%

\title*{Ramsey Theory Problems over the Integers: Avoiding Generalized Progressions}


\author{Andrew Best, Karen Huan, Nathan McNew, Steven J. Miller, Jasmine Powell, Kimsy Tor, and Madeleine Weinstein}
\institute{Department of Mathematics and Statistics, Williams College, andrewbest312@gmail.com; Department of Mathematics and Statistics, Williams College, klh1@williams.edu; Department of Mathematics, Dartmouth College, nathan.g.mcnew.gr@dartmouth.edu; Department of Mathematics and Statistics, Williams College, sjm1@williams.edu (Steven.Miller.MC.96@aya.yale.edu); Department of Mathematics, Northwestern University, jasminepowell2015@u.northwestern.edu; Department of Mathematics, Manhattan College, ktor.student@manhattan.edu; Department of Mathematics, Harvey Mudd College, mweinstein@g.hmc.edu}

%
%
%

%
%
\maketitle

\abstract{Two well studied Ramsey-theoretic problems consider subsets of the natural numbers which either contain no three elements in arithmetic progression, or in geometric progression. We study generalizations of this problem, by varying the kinds of progressions to be avoided and the metrics used to evaluate the density of the resulting subsets.\\ \
One can view a 3-term arithmetic progression as a sequence $x, f_n(x), f_n(f_n(x))$, where $f_n(x) = x + n$, $n$ a nonzero integer. Thus avoiding three-term arithmetic progressions is equivalent to containing no three elements of the form $x, f_n(x), f_n(f_n(x))$ with $f_n \in\mathcal{F}_{\rm t}$, the set of integer translations. One can similarly construct related progressions using different families of functions.  We investigate several such families, including geometric progressions ($f_n(x) = nx$ with $n > 1$ a natural number) and exponential progressions ($f_n(x) = x^n$).\\ \
Progression-free sets are often constructed ``greedily,'' including every number so long as it is not in progression with any of the previous elements. Rankin characterized the greedy geometric-progression-free set in terms of the greedy arithmetic set.  We characterize the greedy exponential set and prove that it has asymptotic density 1, and then discuss how the optimality of the greedy set depends on the family of functions used to define progressions.\\ \
Traditionally, the size of a progression-free set is measured using the (upper) asymptotic density, however we consider several different notions of density, including the uniform and exponential densities.}





\section{Background}\label{sec:bac}
A classic Ramsey-theoretic problem is to consider how large a set of integers can be without containing  3 terms in the set that are in arithmetic progression. In other words, no 3 integers in the set are of the form $a, a+n, a+2n$. An analogous problem involves looking at sets avoiding 3-term geometric progressions of the form $a, na, n^2a$. This question was first introduced by Rankin in 1961. More recently, Nathanson and O'Bryant \cite{Nato'Bry1} and the third named author \cite{McNew} have made further progress toward characterizing such sets and finding bounds on their maximal densities.


Progression-free sets are often constructed ``greedily'': Starting with an initial included integer, every successive number is included so long as doing so does not create a  progression involving any of the previously included elements. We consider two possible generalizations of the greedy arithmetic and geometric progresssion-free sets. Let $A_3^*=\{0,1,3,4,9,10,\dots\}$ be the greedy set of nonnegative integers free of arithmetic progressions. Note that $A_3^*$ consists of exactly those nonnegative integers with no digit 2 in their ternary expansions.  Let $G_3^*=\{1, 2, 3, 5, 6, 7, 8, 10, 11, 13, 14, 15, 16, 17, 19, 21, 22, 23, \dots\}$ be the greedy set of positive integers free of geometric progressions. In 1961, Rankin \cite{Rankin} characterized this set as the set of those integers where all of the exponents in their prime factorization are contained in $A_3^*$. We will use this characterization below to compute the size of Rankin's set with respect to various densities.

One can view arithmetic and geometric progressions as part of a larger class of functional progressions consisting of three terms of the form $x,f_{n}(x),f_{n}(f_{n}(x))$. From this perspective, a natural generalization of arithmetic and geometric progressions would be to let $f_n(x)=x^n$ and so consider exponential-progression-free sets. We characterize the greedy set in this case, which we call $E_3^*$. We show that its uniform density is $1$ (Theorem \ref{ud}) and the exponential density of the set of integers excluded from the greedy set $E_3^*$ is $1/4$ (Proposition \ref{ed}). 

Additionally, we consider the relationship between $G_3^*$ and $A_3^*$, namely that the geometric-progression-free set is constructed by taking those numbers with prime exponents in the arithmetic-progression-free set.  This leads us to consider iterating this idea so that in each step the permissible set of exponents comes from the prior iteration. We show that the asymptotic densities of the sets produced in each iteration of this construction approach 1 (Theorem \ref{uad}), but that each has a lower uniform density of 0 (Theorem \ref{lud}).

\section{Comparing Asymptotic and Uniform Densities}\label{sec:asym-uni}

\subsection{Definitions}\label{sec:defns}

The density most frequently encountered is the asymptotic density, $d(A)$. When the asymptotic density does not exist, the upper asymptotic density, $\overline{d}(A)$, and the lower asymptotic density, $\underline{d}(A)$ can be used instead. Their definitions are as follows.

\begin{definition}The asymptotic density of a set $A \subseteq \N$, if it exists, is defined to be
\be\label{eq:density}
d(A) \ = \ \lim_{N \to \infty} \frac{|A \cap \{1, \dots, N\}|}{N}.
\ee
The upper asymptotic density of a set $A \subseteq \N$ is defined to be
\be\label{eq:upperdensity}
\overline{d}(A) \ = \ \limsup_{N \to \infty} \frac{|A \cap \{1, \dots, N\}|}{N},
\ee and the lower asymptotic density of a set $A \subseteq \N$ is defined to be
\be\label{eq:lowerdensity}
\underline{d}(A) \ = \ \liminf_{N \to \infty} \frac{|A \cap \{1, \dots, N\}|}{N}.
\ee
\end{definition}

Using Rankin's characterization of $G_3^*$ in Section \ref{sec:bac},  its asymptotic density can be computed as follows:
\begin{align} \label{rankinset}
d(G_3^*) \ = \ \prod_p \left(\frac{p-1}{p}\right)\sum_{i\in A_3^*}\frac{1}{p^i}
\ & = \ \prod_{p}\left(1-\frac{1}{p}\right) \prod_{i=0}^{\infty}\left(1+\frac{1}{p^{3^i}}\right)\nonumber\\
\ & = \ \prod_{p}\left(1-\frac{1}{p^2}\right) \prod_{i=1}^{\infty}\left(1+\frac{1}{p^{3^i}}\right)\nonumber\\
\ & = \ \prod_{p}\left(1-\frac{1}{p^2}\right) \prod_{i=1}^{\infty}\frac{1-\frac{1}{p^{2\cdot 3^i}}}{1-\frac{1}{p^{3^i}}}\nonumber\\
\ & = \ \frac{1}{\zeta(2)} \prod_{i=1}^{\infty}\frac{\zeta(3^i)}{\zeta(2\cdot3^i)}\ \approx\ 0.71974.
\end{align}

Though the asymptotic density is usually the preferred notion of size of a progression-free set when it exists, other types of density can be computed that reveal different information about the size of a set and the spacing of its elements, or that are more sensitive in the case of very small or large sets. Another way to measure the ``size'' of a set is the \emph{uniform density}, also known as Banach density, first described in \cite{BF1}.
\begin{definition}
The upper uniform density of a set $A \subseteq \N$, if it exists, is defined to be 
\be\label{eq:upperuniformdensity}
\overline{u}(A) \ = \ \lim_{s\to \infty} \max_{n\geq 0} \sum_{a \in A, n <
a \leq n + s}\frac{1}{s},
\ee and the lower uniform density of a set $A \subseteq \N$, if it exists, is defined to be
\be\label{eq:loweruniformdensity}
\underline{u}(A) \ = \ \lim_{s\to \infty} \min_{n\geq 0} \sum_{a \in A, n <
a \leq n + s}\frac{1}{s}.
\ee
\end{definition}

The uniform density exists if the upper and lower uniform densities are the same, in which case $u(A) = \overline{u}(A) = \underline{u}(A)$. Intuitively, the uniform density measures how sparse or dense a set can be locally.  Notice that uniform density is more sensitive than asymptotic density, specifically to local densities in any interval past the initial interval. This is particularly helpful to us because our sets tend to have increasing gaps between elements.    For more information and background on the uniform density see \cite{BF2,GLS,Gr}.
For any set $A$ of natural numbers we have (see \cite{Gr})
\be\label{densityinequalities}
0 \ \leq \ \underline{u}(A) \ \leq \ \underline{d}(A) \ \leq \ \overline{d}(A) \ \leq \ \overline{u}(A) \ \leq \ 1.
\ee

Furthermore, notice that if both the uniform and the asymptotic densities exist, then they are equal.
These values can differ substantially, however.  It is shown in \cite{Mi} that for any $0\leq \alpha \leq \beta \leq \gamma \leq \delta \leq 1$ there exists a set of integers, $A$, with $\underline{u}(A)=\alpha$, $\underline{d}(A)=\beta$, $\overline{d}(A)=\gamma$ and $\overline{u}(A)=\delta$.  
\subsection{Sets Free of Geometric Progressions} In \cite{McNew} a set $S$ is constructed to have high upper asymptotic density as follows.  For any fixed $N$, let
\be\label{eq:nathancheese}
\mathbb{S}_N\ =\ \left(\frac{N}{48}, \frac{N}{45}\right] \bigcup \left(\frac{N}{40}, \frac{N}{36}\right] \bigcup \left(\frac{N}{32}, \frac{N}{27}\right] \bigcup \left(\frac{N}{24}, \frac{N}{12}\right] \bigcup \left(\frac{N}{9}, \frac{N}{8}\right] \bigcup \left(\frac{N}{4}, N\right].
\ee

Now, fix $N_1= N$, let
\be
N_i\ =\ \frac{48^2N_{i-1}^2}{N_1},
\ee
and let $S$ be the union of all such $S_{N_i}$. This set is free of geometric progressions with integral ratios and has upper asymptotic density approximately 0.815509. However, its lower asymptotic density, and therefore its lower uniform density, is 0, and it is readily seen that its upper uniform density is 1, because $S$ contains arbitrarily long stretches of included numbers.

An open problem, stated by Beiglb\"ock, Bergelson, Hindman, and Strauss \cite{BBHS}, asks whether it is possible to find a set of integers free of geometric progressions such that the number of consecutive excluded terms is bounded. (Such a set is sometimes called syndetic.)  Using a Chinese remainder theorem-type argument one find that Rankin's greedy set does not have this property. To see this, let $p_n$ denote the $n$-th prime number and consider the congruences
\bea\label{rankingaps}
a &\equiv& p_1^2 \pmod {p_1^3}\nonumber\\
a + 1 &\equiv& p_2^2 \pmod {p_2^3}\nonumber\\
& \vdots& \nonumber\\
a + n - 1 &\equiv& p_{n}^2 \pmod {p_{n}^3}.
\eea
By the Chinese remainder theorem, there exists an integer $a$ that satisfies these congruences, so that the $n$ consecutive  integers $a, \dots, a+n-1$ are all excluded from Rankin's greedy set.

The problem above is equivalent to asking whether there exists a set of integers with positive lower uniform density which avoid geometric progressions, which leads us to consider the uniform density of similar sets. This problem has also been considered recently by \cite{He}.



\subsection{Upper Uniform Density of Rankin's Set}


We know the asymptotic density of Rankin's set, $G_3^*$, as well as its lower uniform density by the argument above. We now consider the upper uniform density of Rankin's set, starting with a simple upper bound.

\begin{theorem}
An upper bound on the upper uniform density of $G_3^*$ is $7/8$.
\end{theorem}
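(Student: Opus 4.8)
The plan is to exhibit a single arithmetic progression, every term of which is excluded from $G_3^*$, and whose terms are spread evenly enough that they delete a fixed proportion of \emph{every} long window. Since the upper uniform density is governed by the densest window, guaranteeing that a positive fraction is removed from all windows yields an upper bound. The key observation is that $2 \notin A_3^*$: by Rankin's characterization, $n \in G_3^*$ requires $v_p(n) \in A_3^*$ for every prime $p$, where $v_p$ denotes the $p$-adic valuation. In particular, any $n$ with $v_2(n) = 2$ is excluded, and these are exactly the integers with $n \equiv 4 \pmod 8$.

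First I would record that $E = \{n : n \equiv 4 \pmod 8\}$ is disjoint from $G_3^*$. Next I would count $E$ inside an arbitrary window: for any $n \ge 0$ and $s \ge 1$, the interval $(n, n+s]$ consists of $s$ consecutive integers and therefore contains at least $\lfloor s/8 \rfloor \ge s/8 - 1$ elements of the fixed residue class $4 \bmod 8$. Since all of these lie outside $G_3^*$,
\be
|G_3^* \cap (n, n+s]| \ \le \ s - \left(\frac{s}{8} - 1\right) \ = \ \frac{7s}{8} + 1
\ee
holds for every $n \ge 0$. Taking the maximum over $n \ge 0$, dividing by $s$, and letting $s \to \infty$ gives $\overline{u}(G_3^*) \le 7/8$, as claimed.

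There is essentially no hard step here; this is intended only as a crude first bound. The one point requiring genuine care is that one must use a lower bound on the number of excluded integers that is valid \emph{uniformly} in every window, rather than merely invoking the asymptotic density $1/8$ of $E$ — which is precisely why choosing an honest arithmetic progression (where equidistribution in every interval is automatic) makes the argument painless. The genuinely harder problem, left for a sharper bound, is that the full excluded set consists of all $n$ with some valuation $v_p(n) \notin A_3^*$; these conditions interact across different primes, so improving $7/8$ requires a careful inclusion–exclusion over the forbidden valuations rather than a single residue class.
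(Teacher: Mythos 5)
Your proposal is correct and follows essentially the same route as the paper: both hinge on the observation that $2 \notin A_3^*$, so every integer exactly divisible by $2^2$ (i.e., $n \equiv 4 \pmod 8$) is excluded from $G_3^*$, and then bound the upper uniform density by that of the complement of this residue class. The only difference is cosmetic — you spell out the window-by-window count $\lfloor s/8 \rfloor \ge s/8 - 1$ explicitly, where the paper simply cites $\overline{u}(\{x : x \not\equiv 4 \bmod 8\}) = 7/8$ together with monotonicity of $\overline{u}$ under inclusion.
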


\begin{proof}
Note that all integers that are exactly divisible by $2^2$ are excluded from Rankin's set. That is, all integers that are congruent to $4 \bmod 8$ are excluded. We know that $\overline{u}(\{x : x \not \equiv 4 \bmod 8\}) = 7/8$, and therefore we have that $\overline{u}(G_3^*) \leq 7/8$.
\end{proof}

By extending this kind of argument to primes other than 2 and powers greater than 2 which must also be excluded we are able to determine the exact upper uniform density of this set.  Enumerate the primes by $\{p_j\}_{j=1}^\infty$ and recall that for any prime $p$, if any $n$ in our set is exactly divisible by $p^k$ for some $k$ in $A_3^*$ then $n$ is excluded from Rankin's set.





\begin{theorem} The upper uniform density of $G_3^*$ equals its asymptotic density: $\ol u (G_3^*)= d(G_3^*)$. \label{thm:uudrankin}
\end{theorem}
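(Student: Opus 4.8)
The plan is to prove the two inequalities $\overline{u}(G_3^*)\ge d(G_3^*)$ and $\overline{u}(G_3^*)\le d(G_3^*)$ separately; since the asymptotic density exists by \eqref{rankinset}, combining them gives the claim (and incidentally shows the defining limit for $\overline{u}$ actually exists). The lower bound is immediate: testing the definition of upper uniform density on the windows anchored at the origin gives $\max_{n\ge 0}\frac{1}{s}\,|G_3^*\cap(n,n+s]|\ge \frac{1}{s}\,|G_3^*\cap(0,s]|$, and the right-hand side converges to $d(G_3^*)$ as $s\to\infty$ because the asymptotic density exists; this is also a special case of \eqref{densityinequalities}. All the work is therefore in the matching upper bound, and the warm-up theorem ($\overline{u}(G_3^*)\le 7/8$) suggests the route: relax the membership condition to finitely many periodic constraints, for which the uniform density is easy to control, and then let the relaxation tend to the true set.

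Concretely, I would fix a prime bound $P$ and an exponent bound $K$ and define the relaxed set $G^{(P,K)}$ consisting of all $n$ such that for every prime $p\le P$, either $v_p(n)>K$ or $v_p(n)\in A_3^*$, where $v_p(n)$ denotes the exponent of $p$ in the factorization of $n$. Two facts drive the argument. First, Rankin's characterization forces $G_3^*\subseteq G^{(P,K)}$: every $n\in G_3^*$ has all of its prime exponents in $A_3^*$, so in particular it satisfies the weaker constraints defining $G^{(P,K)}$. Since window counts are monotone under inclusion, this yields $\overline{u}(G_3^*)\le \overline{u}(G^{(P,K)})$. Second, $G^{(P,K)}$ is periodic modulo $Q=\prod_{p\le P}p^{K+1}$, because the constraint at each $p\le P$ only distinguishes the valuations $0,1,\dots,K$ (each detectable mod $p^{K+1}$) from the single admissible class $v_p(n)>K$ (namely $p^{K+1}\mid n$); adding $Q$ preserves each of these outcomes.

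For a set periodic mod $Q$ with $c$ elements per period, a window of length $s$ contains between $\lfloor s/Q\rfloor c$ and $(\lfloor s/Q\rfloor+1)c$ elements regardless of its position, so its density in any window tends to $c/Q$ and hence $\overline{u}(G^{(P,K)})=d(G^{(P,K)})$; by the Chinese remainder theorem this density factors as a product of local densities over $p\le P$. It then remains to show $d(G^{(P,K)})\to d(G_3^*)$ as $P,K\to\infty$. The local density at a prime $p\le P$ is $\sum_{k\in A_3^*,\,k\le K}\frac{p-1}{p^{k+1}}+\frac{1}{p^{K+1}}$, which decreases to the factor $\left(1-\frac{1}{p}\right)\sum_{k\in A_3^*}p^{-k}$ appearing in \eqref{rankinset} as $K\to\infty$, while each prime $p>P$ imposes no constraint and contributes a factor $1$. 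Because each relaxed factor dominates the corresponding exact factor, $d(G^{(P,K)})\ge d(G_3^*)$ for all $P,K$, and letting $P,K\to\infty$ in the convergent product of \eqref{rankinset} gives $\inf_{P,K}d(G^{(P,K)})=d(G_3^*)$. Combined with the inclusion bound this yields $\overline{u}(G_3^*)\le d(G_3^*)$.

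I expect the main obstacle to be organizing the two limits cleanly: the relaxation must simultaneously contain $G_3^*$, so that the inequality points in the direction needed for an upper bound, and be genuinely periodic, which forces the exponent truncation at $K$ together with the somewhat unnatural convention that very high prime-power divisibility ($v_p(n)>K$) is declared admissible. The care lies in checking that this convention inflates each local density only by the vanishing term $p^{-(K+1)}$, so that the product of relaxed local factors converges \emph{downward} to the exact product in \eqref{rankinset} rather than to something strictly larger. The only quantitative point requiring attention is the tail estimate controlling convergence of the infinite product uniformly enough to interchange the limit with the product; the periodicity and monotonicity steps are formal.
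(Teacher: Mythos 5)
Your proposal is correct and takes essentially the same approach as the paper: your relaxed sets $G^{(P,K)}$ are precisely the paper's sets $T_i$ (with $P=p_i$ and $K=i$), and both arguments combine the inclusion $G_3^*\subseteq T_i$, periodicity of the relaxed set modulo $\prod_{j\le i}p_j^{i+1}$ via the Chinese remainder theorem, and convergence of the product of local factors to the Euler product for $d(G_3^*)$ in \eqref{rankinset}. The only differences are cosmetic --- you decouple the prime bound from the exponent bound and make the periodicity and limit-interchange steps more explicit than the paper does.
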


\begin{proof} By \eqref{densityinequalities} we know that $d(G_3^*) \leq \ol u (G_3^*)$. Thus to prove our result it is sufficient to show that $\ol u (G_3^*) \leq d (G_3^*)$.

Let
\be T_i \ := \ \{k \ : \ p_j^b \ | \ k \Rightarrow p_j^{b+1} \ | \ k \; \text{holds for all } \; j\leq i \;\text{ and } b \leq i, b \notin A_3^* \} \ee
be the set of integers not exactly divisible by any of the first $i$ primes raised to a power (at most i) that is not in $A_3^*$.

Then, as a first step, notice that the proportion of integers not exactly divisible by $p_j^2$ in any interval of length $p_j^3$ is $\left(1-\frac{1}{p_j^2}+\frac{1}{p_j^3}\right)$. Generalizing this, the proportion of integers not exactly divisible by $p_j^b$ for any $b \leq i$, that is not in $A_3^*$ in any interval of length $p_j^{i+1}$ is
\be 1 - \sum_{\substack{0 \leq b\leq i\\ b \notin A_3^*}}\left( \frac{1}{p_j^b} - \frac{1}{p_j^{b+1}} \right).
\ee
Thus, by the Chinese Remainder Theorem, the proportion of integers contained in $T_{i}$ in any interval of length $\prod_{j=1}^i p_j^{i+1}$ is
\be \prod_{j=1}^i \left(1 - \sum_{\substack{0\leq b\leq i\\ b \notin A_3^*}}\left( \frac{1}{p_j^b} - \frac{1}{p_j^{b+1}}\right) \right), \label{smprop2} \ee
so \eqref{smprop2} gives the uniform density of $T_{i}$, and thus the upper uniform density as well.

Because $G_3^* \subset T_i$ for each $i$, we have $\overline{u}(G_3^*) \leq \overline{u}(T_i)$ for each $i$.  Using the expression \eqref{smprop2} for $\ol u(T_{i})$, and letting $i$ go to infinity,
\begin{align}
\ol u (G_3^*) \ \leq \ \ol \lim_{i \to \infty} u (T_i) \ & = \ \lim_{i\to\infty} \prod_{j=1}^i \left(1 - \sum_{\substack{0 \leq b\leq i\\ b \notin A_3^*}}\left( \frac{1}{p_j^b} - \frac{1}{p_j^{b+1}}\right) \right) \nonumber \\
& = \  \prod_{j=1}^\infty \left(1 - \left(1 - \frac{1}{p_j} \right) \sum_{\substack{ b \in \N \setminus A_3^*}} \frac{1}{p_j^b} \right) \nonumber \\
\ & = \ \prod_{j=1}^\infty \left(1 - \frac{1}{p_j} \right) \left[ \sum_{b=0}^\infty \frac{1}{p_j^b} - \sum_{b\in \N \setminus A_3^*} \frac{1}{p_j^b} \right] \nonumber \\
\ & = \ \prod_{j=1}^\infty \left(1 - \frac{1}{p_j} \right) \sum_{a\in A_3^*} \frac{1}{p_j^a} = d(G_3^*).
\end{align}
\end{proof}

\section{Greedy Set Avoiding Exponential Progressions} \label{sec:expo}
We can view both a 3-term arithmetic progression and a 3-term geometric progression as a sequence $x, f_n(x), f_n(f_n(x))$, where $f_n(x) = x + n$ or $f_n(x) = nx$ respectively. We can similarly construct other sequences in terms of different families of functions. We consider first exponential progressions with $f(x) = x^n$.

\subsection{Characterization and Density}
Let $E_3^* = \{1, 2, 3,\dots, 14, 15, 17, \dots, 79, 80, 82, \dots\}$ be the greedy set of integers free of exponential progressions; that is, $E_3^*$ avoids progressions of the form
$x, x^n, x^{n^2}$, 
where $x, n$ are natural numbers greater than 1.

\begin{proposition}
An integer $k = p_1^{a_1}p_2^{a_2} \cdots p_n^{a_n}$ is included in $E_3^*$ if and only if $g = \gcd (a_1,a_2,\dots,a_n)$ is included in $G_3^*$.
\end{proposition}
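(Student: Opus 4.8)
The plan is to prove both directions at once by strong induction on $k$, exploiting a clean correspondence between exponential progressions of integers and geometric progressions of their gcd-exponents. For an integer $k = p_1^{a_1}\cdots p_n^{a_n} > 1$ write $g(k) = \gcd(a_1,\dots,a_n)$. The crucial algebraic observation is that raising $k$ to a power scales every exponent, so $g(x^m) = m\,g(x)$ for every $m \geq 1$. Consequently an exponential progression $x, x^n, x^{n^2}$ is carried by $g(\cdot)$ to the triple $g(x),\, n\,g(x),\, n^2 g(x)$, which is precisely a $3$-term geometric progression with integer ratio $n > 1$. This map between the two kinds of progressions is the engine of the whole argument.

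Next I would pin down the greedy exclusion criterion in each setting. Since the greedy set is built by scanning integers in increasing order, and since every exponential progression satisfies $x < x^n < x^{n^2}$, an integer $k$ is excluded from $E_3^*$ if and only if $k = x^{n^2}$ for some $n > 1$ with both $x$ and $x^n$ already in $E_3^*$ (the completing term must be the largest, hence the third). The identical reasoning shows $g$ is excluded from $G_3^*$ exactly when $g = n^2 a$ for some integer $n > 1$ with $a, na \in G_3^*$.

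With these in hand the induction runs as follows. Assume the equivalence for all $m < k$. For the forward direction, suppose $g(k) \in G_3^*$ but, toward a contradiction, that $k$ is excluded, say $k = x^{n^2}$ with $x, x^n \in E_3^*$ and $n > 1$. Since $x, x^n < k$, the inductive hypothesis gives $g(x) \in G_3^*$ and $g(x^n) = n\,g(x) \in G_3^*$; together with $g(k) = n^2 g(x) \in G_3^*$ these form a geometric progression lying inside the geometric-progression-free set $G_3^*$, a contradiction, so $k \in E_3^*$. For the converse, suppose $g(k) \notin G_3^*$, so $g(k) = n^2 a$ with $a, na \in G_3^*$ and $n > 1$. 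Because $n^2 \mid g(k)$ divides every exponent $a_i$, the integer $x := k^{1/n^2} = \prod_i p_i^{a_i/n^2}$ is genuinely an integer, with $g(x) = a$ and $g(x^n) = na$. Both lie in $G_3^*$, so the inductive hypothesis yields $x, x^n \in E_3^*$; since $x > 1$ and $k = x^{n^2}$ with $n > 1$, this exhibits $k$ as the third term of an exponential progression whose earlier terms already belong to $E_3^*$, forcing $k$ to be excluded.

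I expect the main obstacle to be the bookkeeping around the greedy exclusion criterion rather than the algebra. One must argue carefully that the element completing any progression is always the largest term, so that only ``third-term'' exclusions can occur; and in the converse step one must check that the divisibility $n^2 \mid g(k)$ propagates to $n^2 \mid a_i$ for every $i$, which is exactly what guarantees that $k^{1/n^2}$ is an integer with the correct gcd. The base case $k = 2$ is immediate, since $g(2) = 1 \in G_3^*$ and $2$ is trivially not a perfect power, and $k = 1$ is handled by convention as $1 \in E_3^* \cap G_3^*$.
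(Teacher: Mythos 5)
Your proof is correct and takes essentially the same route as the paper's: strong induction on $k$, the key identity $g(x^m)=m\,g(x)$ translating exponential progressions into geometric progressions of gcd-exponents, and the greedy fact that an excluded element must be the \emph{last} term of a progression whose earlier terms already lie in the set. The only cosmetic difference is that you argue the forward direction by contradiction while the paper proves both contrapositives directly, and you make explicit (the completing term is the largest; $n^2 \mid g$ propagates to $n^2 \mid a_i$) two points the paper uses implicitly.
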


\begin{proof} We proceed by induction on $k$. Clearly, $1 \in E_3^*$. Assume that for all integers less than $k$, our inductive hypothesis holds, and that $k= p_1^{a_1}p_2^{a_2} \cdots p_n^{a_n}$, with $g = \gcd (a_1,a_2,\dots,a_n)$.

Suppose first that $g \notin G_3^*$. Since $g$ is excluded from $G_3^*$, it must be the last term of a geometric progression. Thus, there exists a natural number $r > 1$ such that $g/r^2, g/r, g$ is a geometric progression with $g/r^2$ and $g/r$ both in $G_3^*$. Setting $b_i = a_i/r$, it is clear that $\gcd (b_i) = g/r$, and by the inductive hypothesis, the number $k_1 = p_1^{b_1}p_2^{b_2}\cdots p_n^{b_n}$ is in $E_3^*$. Similarly, if we set $c_i = a_i/r^2$, it is clear that $\gcd (c_i) = g/r^2$, and by the inductive hypothesis, it follows again that the number $k_0 = p_1^{c_1}p_2^{c_2}\cdots p_n^{c_n}$ is in $E_3^*$. Then, since $k_0^r = k_1$ and $k_1^r = k$, it follows that $k_0, k_1, k$ is an exponential progression, so that $k$ is not in $E_3^*$.

Now suppose that $k \notin E_3^*$. Since $k$ is excluded from $E_3^*$, it must be the last term of an exponential progression; thus there exists a natural number $m > 1$ such that $\sqrt[m^2] k, \sqrt[m] k, \ k$ is an exponential progression with the first two terms in $E_3^*$. In particular, since
\[ \sqrt[m^2] k \ = \ p_1^{\frac{a_1}{m^2} }p_2^{\frac{a_2}{m^2}} \cdots p_n^{\frac{a_n}{m^2}} \in E_3^*, \]
we have by the inductive hypothesis that the number
\[ g/m^2 \ = \ \gcd \left ( \frac{a_1}{m^2}, \frac{a_2}{m^2}, \ldots, \frac{a_n}{m^2} \right ) \]
is in $G_3^*$. Similarly, $g/m \in G_3^*$. Then, since $g/m^2, g/m, g$ is a geometric progression, it follows that $g$ is not in $G_3^*$.
\end{proof}

Throughout the subsequent sections we will refer to the set of squareful numbers.

\begin{definition}
An integer $n$ is squareful if, for any prime $p$ dividing $n$, $p^2$ also divides $n$.
\end{definition}

Unlike the cases of arithmetic progressions and geometric progressions, where the greedy sets are not necessarily optimal, we find that it is not really possible to do better than $E_3^*$ while avoiding exponential progressions.  We see first that $E_3^*$ already has uniform (and asymptotic) density 1.


\begin{theorem}\label{ud} We have $u(E_3^*)= d(E_3^*) = 1$.
\end{theorem}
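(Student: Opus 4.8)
The plan is to use the Proposition to pass to the complement and then to show that the excluded set $C := \N \setminus E_3^*$ is so sparse that it is negligible for both densities. By the Proposition, an integer $k = p_1^{a_1}\cdots p_r^{a_r}$ lies in $C$ exactly when $g = \gcd(a_1,\dots,a_r) \notin G_3^*$. Since $G_3^* = \{1,2,3,5,6,7,8,\dots\}$, the least positive integer missing from $G_3^*$ is $4$, so every $k \in C$ has $g \ge 4$; as $g \mid a_i$ for all $i$, each exponent satisfies $a_i \ge 4$. Thus every element of $C$ is squareful, and more precisely $k$ is a perfect $g$-th power with $g \ge 4$, so $C \subseteq \{\text{perfect }4\text{th powers}\} \cup \{\text{perfect }g\text{-th powers with } g \ge 5\}$.

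For the asymptotic statement it then suffices to count these integers up to $N$. The perfect fourth powers contribute at most $N^{1/4}$, and the powers of exponent $\ge 5$ contribute at most $\sum_{5 \le g \le \log_2 N} N^{1/g} \le N^{1/5}\log_2 N$; alternatively one may simply invoke the classical estimate that there are $\Theta(\sqrt N)$ squareful numbers up to $N$. Either way $|C \cap \{1,\dots,N\}| = o(N)$, so $d(C) = 0$ and $d(E_3^*) = 1$. Combined with \eqref{densityinequalities}, $d(E_3^*) = 1$ forces $\overline{u}(E_3^*) \ge \overline{d}(E_3^*) = 1$, hence $\overline{u}(E_3^*) = 1$. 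It remains only to prove the lower uniform density is also $1$.

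Since every window $(n, n+s]$ contains exactly $s$ integers, of which at most $|C \cap (n,n+s]|$ are excluded, it suffices to prove $\overline{u}(C) = 0$, i.e.\ that the number of excluded integers in any window $(n, n+s]$ is $o(s)$ uniformly in $n$. For the perfect fourth powers this is immediate from the subadditivity of $t \mapsto t^{1/4}$, which gives at most $(n+s)^{1/4} - n^{1/4} + 1 \le s^{1/4} + 1$ of them in the window, a bound independent of $n$. For the powers of exponent $g \ge 5$ I would split the exponent range: for $g$ in a bounded initial range the same subadditivity bound yields a total of $O(s^{1/5})$, while for large $g$ the powers $m^g$ are so widely spaced (consecutive powers of a fixed base $m\ge 2$ differing by a factor $\ge 2$, so that when $s \le n$ at most one power of each base lands in the window) that very few exponents can contribute at all.

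The main obstacle is precisely this last point. The naive per-exponent bound permits as many as $\log_2(n+s)$ distinct exponent families to meet the window, a quantity that grows with the location $n$ and would ruin uniformity for windows far from the origin. The crux is therefore to show that, for windows with $n$ large relative to $s$, only $o(s)$ (indeed $O(1)$) exponent families actually contain a point, so that the worst window is effectively the initial one $(0,s]$, where the count is $O(s^{1/4})$. Granting this, $|C \cap (n,n+s]| = O(s^{1/4})$ uniformly in $n$, whence $\overline{u}(C) = 0$ and $\underline{u}(E_3^*) = 1$. Together with $\overline{u}(E_3^*) = 1$ and $d(E_3^*) = 1$ this gives $u(E_3^*) = d(E_3^*) = 1$.
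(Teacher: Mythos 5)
Your reduction to showing $\overline{u}(\N\setminus E_3^*)=0$ is the right move, and the asymptotic half of your argument (counting $g$-th powers with $g\ge 4$ up to $N$) is fine. The problem is the step you yourself flag and then assume (``Granting this''): that for a window $(n,n+s]$ with $n$ large relative to $s$, only $O(1)$ exponents $g$ actually contribute a $g$-th power. Your per-exponent estimate does show at most one $g$-th power per exponent lands in the window once, say, $n>s^2$ (since $(n+s)^{1/g}-n^{1/g}+1<\tfrac{s}{4n^{3/4}}+1<2$), but the number of candidate exponents is $\log_2(n+s)$, which for fixed $s$ is unbounded in $n$ --- and the upper uniform density takes the maximum over $n$ \emph{before} letting $s\to\infty$, so this must be controlled uniformly. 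Ruling out that some far-out window of bounded length contains many perfect powers with pairwise distinct exponents is not a routine point: it is a Pillai-type question about small differences $x^p-y^q$ between perfect powers, where even finiteness for a fixed difference is deep (Mihailescu's theorem settles difference $1$; the general case is open, and known short-interval power-counting results give bounds that grow with $n$). So the crux of your proof is precisely the part that is missing, and it cannot be filled by the elementary spacing considerations you sketch; ``the worst window is effectively the initial one'' is a heuristic, not an argument.

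The paper avoids this wall entirely by throwing away the perfect-power structure and keeping only squarefulness. Since every excluded integer is squareful, $\N\setminus E_3^*$ is contained in each set $R_i=\{k : p_j\mid k \Rightarrow p_j^2\mid k \text{ for all } j\le i\}$, and each $R_i$ is a union of residue classes modulo $\prod_{j=1}^i p_j^2$, hence periodic. Periodicity makes its density the same in every window of one period's length, so by the Chinese Remainder Theorem its \emph{uniform} density is $\prod_{j=1}^i\bigl(1-\tfrac{1}{p_j}+\tfrac{1}{p_j^2}\bigr)$ with no short-interval counting at all; since $\sum_j 1/p_j$ diverges, this product tends to $0$ as $i\to\infty$, giving $\overline{u}(\N\setminus E_3^*)=0$ and hence $\underline{u}(E_3^*)=1$. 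In short: the congruence/periodicity argument buys uniformity in $n$ for free, which is exactly what your power-counting route lacks, and replacing your large-exponent analysis with the containment in $R_i$ is the natural repair.
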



\begin{proof}
With Equation \eqref{densityinequalities} in mind, we prove that the uniform density of $E_3^*$ is 1 by showing that the lower uniform density is 1. Equivalently, we show that the upper uniform density of $\mathbb{N}\setminus E_3^*$ is 0. Since $\mathbb{N}\setminus E_3^*$ is a subset of the squareful numbers, it is sufficient to show that the upper uniform density of the squareful numbers is 0, which we do by considering yet another superset, namely, the set of numbers not exactly divisible by the first power of any small primes.

Let
\be R_i \ := \ \{k \ : \ p_j \ | \ k \Rightarrow p_j^2 \ | \ k \; \text{holds for all } \; j\leq i \} \ee
be the set of integers not exactly divisible by any of the first $i$ primes to the first power.
Notice that the proportion of integers not exactly divisible by $p_j$ in any interval of length $p_j^2$ is $\left(1-\frac{1}{p_j} +\frac{1}{p_j^2}\right)$.
Thus, by the Chinese Remainder Theorem, the proportion of integers contained in $R_i$ in any interval of length $\prod_{j=1}^i p_j^2$ is

\be \prod_{j=1}^i \left(1-\frac{1}{p_j}+\frac{1}{p_j^2}\right), \label{ethreeprop}   \ee
so \eqref{ethreeprop} gives the uniform density of $R_i$, and thus the upper uniform density as well.

Because $\mathbb{N}\setminus E_3^* \subset R_i$ for each $i$, we have $\overline{u}(\mathbb{N}\setminus E_3^*) \leq \overline{u}(R_i)$ for each $i$. Using the expression \eqref{ethreeprop} for $\ol u(R_{i})$, and letting $i$ go to infinity,
\begin{align}
\ol u (\mathbb{N}\setminus E_3^*) \ \leq \ \lim_{i \to \infty} u (R_i) \ & = \prod_{j=1}^\infty \left(1-\frac{1}{p_j}+\frac{1}{p_j^2}\right) \ =\ 0
\end{align}

Thus we must have that $\underline{u}(E^*_3) = 1-\ol{u}(\N \setminus E_3^*) =1$ and so both the uniform and asymptotic densities of $E_3^*$ are 1.
\end{proof}

Because $E_3^*$ has density 1, we focus now on the excluded set of integers, $\N\setminus E_3^*$, which has density 0, and ask whether it is possible to do better, creating a set which avoids exponential progressions while excluding fewer integers.  Using the exponential density, which can be used to further analyze sets with density zero, we will see that $E_3^*$ is essentially best possible.

\begin{definition} The upper exponential density of a set $A \subseteq \mathbb{N}$ is defined to be
\be\label{eq:expodensity}
\overline{e}(A) \ := \ \limsup_{n \to \infty} \frac{1}{\log(n)} \log \left( \sum_{a \in A, a \leq n} 1 \right).
\ee
The lower exponential density $\underline{e}$, and the exponential density $e$ are defined similarly in the usual way.
\end{definition}

Note that the exponential density is defined such that the $k$th-powers have density $1/k$, and that any set with positive lower asymptotic density will have exponential density 1.

\begin{proposition} \label{ed} The exponential density of the set of integers excluded from the greedy exponential-progression-free set is $e(\N\setminus E_3^*)= 1/4$.
\end{proposition}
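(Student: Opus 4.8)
The plan is to reduce the entire statement to a counting problem via the Proposition above, which says that $k = p_1^{a_1}\cdots p_n^{a_n}$ lies in $\N \setminus E_3^*$ exactly when $g := \gcd(a_1,\dots,a_n) \notin G_3^*$. The first thing I would record is a clean description of the level sets of $g$: an integer $k$ has exponent-gcd exactly $g$ if and only if $k = m^g$ for some $m$ that is not itself a perfect power (equivalently, $m$ has exponent-gcd $1$). Indeed, if $m = q_1^{e_1}\cdots q_r^{e_r}$ with $\gcd(e_i) = 1$, then $m^g$ has exponents $g e_i$ with gcd exactly $g$, and conversely every such $k$ factors uniquely this way. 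Writing $N_g(n)$ for the number of $k \le n$ with exponent-gcd exactly $g$, the map $k \mapsto m = k^{1/g}$ is then a bijection onto the non-perfect-powers at most $n^{1/g}$. Since the perfect powers up to $x$ number $O(\sqrt{x})$ (the squares alone give $\lfloor\sqrt{x}\rfloor$, and all higher powers contribute $O(x^{1/3}\log x)$), this yields $N_g(n) = n^{1/g} + O(n^{1/(2g)})$.

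Next I would assemble these pieces. Because $1 \in G_3^*$ (so ordinary non-perfect-powers, the case $g=1$, are \emph{not} excluded) while the smallest positive integer absent from $G_3^*$ is $4$ (as $4 = 2^2$ and $2 \notin A_3^*$, whereas $2,3 \in G_3^*$), the excluded set decomposes as a disjoint union over integers $g \ge 4$ with $g \notin G_3^*$ of the exact-level sets above. Moreover $N_g(n) = 0$ as soon as $2^g > n$, i.e. once $g > \log_2 n$, since $2^g$ is the least integer with exponent-gcd $g$. Isolating the leading level $g = 4$ on the left and bounding each level crudely by $n^{1/g} \le n^{1/4}$ on the right gives
\[
N_4(n) \ \le \ \left|(\N \setminus E_3^*) \cap [1,n]\right| \ = \ \sum_{\substack{4 \le g \le \log_2 n \\ g \notin G_3^*}} N_g(n) \ \le \ \sum_{g=4}^{\lfloor \log_2 n\rfloor} n^{1/g} \ \le \ (\log_2 n)\, n^{1/4}.
\]

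Finally I would take logarithms and divide by $\log n$. The lower estimate $N_4(n) = n^{1/4} + O(n^{1/8})$ gives $\log N_4(n) = \tfrac14 \log n + O(1)$, while the displayed upper bound gives $\log\left|(\N\setminus E_3^*)\cap[1,n]\right| \le \tfrac14 \log n + \log\log_2 n$. Dividing by $\log n$ and letting $n \to \infty$ squeezes both the $\liminf$ and the $\limsup$ of $\log\left|(\N\setminus E_3^*)\cap[1,n]\right|/\log n$ to $1/4$, so the exponential density exists and equals $1/4$.

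I expect the only genuine subtlety to be the upper bound: the excluded set is an infinite union of exact-$g$-th-power sets, and one must verify that the accumulated contributions from all $g \ge 5$ cannot inflate the exponent above $1/4$. The resolution is exactly the observation above, that there are only $O(\log n)$ nonempty levels and each contributes at most $n^{1/4}$, so the total is at most a single logarithmic factor times the leading term $n^{1/4}$ — and such a factor is invisible to the exponential density. The one external ingredient, the bound $O(\sqrt{x})$ on the count of perfect powers up to $x$, is standard and only affects lower-order terms, so it poses no real difficulty.
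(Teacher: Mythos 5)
Your proof is correct, but it follows a genuinely different route from the paper's. The paper argues directly from the progression structure of the greedy construction: exclusions caused by progressions $x, x^2, x^4$ dominate, giving roughly $\sqrt[4]{N}$ fourth powers up to $N$, corrected by the count of squareful numbers up to $N^{1/4}$ (which is $O(N^{1/8})$, since if $m$ or $m^2$ is already excluded then $m^4$ survives), while progressions $x, x^n, x^{n^2}$ with $n>2$ contribute only $O(N^{1/n^2})$, absorbed into that error; the resulting count $\sqrt[4]{N} + O(\sqrt[8]{N})$ is then fed into the definition of exponential density. You instead leverage the paper's earlier characterization of $E_3^*$ (that $k = p_1^{a_1}\cdots p_n^{a_n}$ lies in $E_3^*$ if and only if $g = \gcd(a_1,\dots,a_n)$ lies in $G_3^*$) to obtain an \emph{exact} partition of $\N\setminus E_3^*$ into level sets $\{k : g(k) = g\}$ over $g \notin G_3^*$, $g \geq 4$, each in bijection with the non-perfect-powers up to $n^{1/g}$, and then squeeze the excluded count between $N_4(n) = n^{1/4} + O(n^{1/8})$ and $(\log_2 n)\, n^{1/4}$. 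Your route buys rigor: the paper's bookkeeping of the greedy process is somewhat informal (it must argue that the squareful correction captures all interactions among progressions and that the $n > 2$ contributions neither overlap nor inflate the count), whereas your decomposition is exact, needs no reasoning about greedy dynamics beyond the already-proved characterization, and its only external input is the standard $O(\sqrt{x})$ bound on perfect powers; the cost is a harmless $\log n$ factor in the upper bound, which, as you correctly note, is invisible to the exponential density. Indeed, your decomposition even recovers the paper's sharper asymptotic $n^{1/4} + O(n^{1/8})$ for the excluded count, since all levels $g \geq 9$ together contribute at most $O\bigl(n^{1/9}\log n\bigr)$.
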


\begin{proof}
We first consider exponential progressions, $x, x^n, x^{n^2}$, in the case when $n=2$, the smallest non-trivial case.  We will see that numbers excluded from this sort of progression form the bulk of the numbers that are excluded.

In the interval $[1, N]$, a first approximation for the number of integers that are excluded from $E_3^*$ is $N^{1/4}$. If $m \leq N^{1/4}$, then $m^4 \leq N$ and there is a progression of the form $m, m^2, m^4$. However, not every fourth power is thus excluded. Specifically, if $m$ or $m^2$ is already excluded from $E_3^*$ then $m^4$ will not be. For example, $4^4=2^8$ would not be excluded even though it is a fourth power, since $2^4$ is already excluded.

Because this situation only occurs when the initial term, $m$, of an exponential progression is already part of a smaller progression, and thus a number, we account for this sort of integer with an error term counting all the squareful numbers less than $N^{1/4}$. The count of the squareful numbers up to $M$ is  $O\left(M^{1/2}\right)$, see for example \cite{Go}, so the number of squareful numbers up to $N^{1/4}$ is $O\left(N^{1/8}\right)$.  Thus,  simply looking at the exponential progressions where $n=2$, we exclude $\sqrt[4]{N}+O(\sqrt[8]{N})$ elements from the interval $[1, N]$. 

Moreover, for each $n > 2$, we see that the number of excluded terms due to progressions $x,x^n,x^{n^2}$ is $O\left(N^{1/n^2}\right)$ which is smaller than the error term in the expression above.

Finally, we use this to compute the exponential density of $\N\setminus E_3^*$,
\begin{align} \label{eq:expdensity}
e(\N\setminus E_3^*) \ & = \ \lim_{N \to \infty} \frac{\log({\sqrt[4]{N} + O(\sqrt[8]{N})})}{\log{N}} \nonumber \\
\ & = \ \lim_{N \to \infty} \frac{\log(
\sqrt[4]{N}(1+O({N^{-1/8}})))}{\log{N}}  = \ \frac{1}{4}.
\end{align}

\end{proof}

Note that, any set that avoids exponential progressions will have to exclude on the order of $\sqrt[4]{N}$ terms to account for fourth powers, producing a set of excluded integers with exponential density at least 1/4. So we see that in this sense, $E_3^*$ is the optimal set containing no exponential progressions.

\section{Excluded Exponent Sets}
Another possible way to generalize the sets $A_3^*$ and $G_3^*$ is to iterate the method used to construct $G_3^*$ by taking taking those numbers whose prime exponents are contained in $A_3^*$.  We can construct a third set of numbers where the admissable prime exponents are the integers in $G_3^*$. By repeating this pattern, we construct a family of sets.



\subsection{Characterization and Density}
We obtain the $n$\textsuperscript{th}  set by taking all of the numbers whose primes are raised only to the powers in the $(n-1)$\textsuperscript{th} set. Let $S_n$ be the $n$th set so constructed, where $S_1=A_3^*$ and $S_2=G_3^*$.



\subsection{Density of Iterated Construction}
We consider the asymptotic densities of sets with this construction, and then we consider the lower uniform densities. First, we define a generalization of the square-free numbers and prove two results useful for our discussion.

\begin{definition} Let $x > 0$ be an integer with factorization $p_1^{a_1}p_2^{a_2}\cdots p_n^{a_n}$. We say $x$ is \emph{$k$-free} if $a_i < k$ for each $1 \leq i \leq n$.
\end{definition}

\begin{lemma} \label{lem:dkfreeto1} For each $k \geq 2$, let $Q_k$ be the set of $k$-free numbers. Then $\lim_{k \to \infty} d(Q_k) = 1$.
\end{lemma}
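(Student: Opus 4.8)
The plan is to compute the density of $Q_k$ exactly for each fixed $k$ and then pass to the limit $k \to \infty$. Observe that a positive integer $n$ fails to be $k$-free precisely when $p^k \mid n$ for some prime $p$; thus $Q_k$ is exactly the set of integers divisible by no $k$-th power of a prime. For a single prime $p$ the integers with $p^k \mid n$ have density $p^{-k}$, and these divisibility conditions are ``independent'' across distinct primes by the Chinese Remainder Theorem, which suggests that the density of $Q_k$ should be the Euler product $\prod_p (1 - p^{-k}) = 1/\zeta(k)$.

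To make this rigorous I would use inclusion--exclusion (a squarefree M\"obius sieve). Writing the indicator of being $k$-free as $\sum_{d^k \mid n} \mu(d)$, one obtains
\be
|Q_k \cap \{1, \dots, N\}| \ = \ \sum_{d \geq 1} \mu(d) \left\lfloor \frac{N}{d^k} \right\rfloor \ = \ N \sum_{d=1}^{\infty} \frac{\mu(d)}{d^k} \ + \ O\!\left(N^{1/k}\right),
\ee
where the error comes from replacing each floor by its argument (there are $O(N^{1/k})$ nonzero terms) and truncating the absolutely convergent tail, which is legitimate since $k \geq 2$. Since $\sum_{d \geq 1} \mu(d) d^{-k} = \prod_p (1 - p^{-k}) = 1/\zeta(k)$, dividing by $N$ and letting $N \to \infty$ shows that $d(Q_k)$ exists and equals $1/\zeta(k)$.

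It then remains to take $k \to \infty$. Since $\zeta(k) = 1 + \sum_{n \geq 2} n^{-k}$ with each term decreasing to $0$ and dominated by the summable $n^{-2}$, one gets $\zeta(k) \to 1$, hence $d(Q_k) = 1/\zeta(k) \to 1$. The only delicate point is controlling the error term in the sieve, but for fixed $k$ this is entirely routine; in fact the exact density is not even required, since the crude count $|\{n \leq N : n \notin Q_k\}| \leq \sum_p \lfloor N/p^k \rfloor$ already yields
\be
\overline{d}(\N \setminus Q_k) \ \leq \ \sum_p \frac{1}{p^k} \ \leq \ \zeta(k) - 1,
\ee
so that $\underline{d}(Q_k) \geq 2 - \zeta(k) \to 1$, which sandwiches the density and suffices to conclude. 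I expect the computation via $1/\zeta(k)$ to be the cleanest route, with the main (and quite minor) obstacle being the bookkeeping of the error term and the interchange of limits.
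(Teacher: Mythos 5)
Your proposal is correct and follows essentially the same route as the paper: both establish $d(Q_k) = 1/\zeta(k)$ and conclude by letting $k \to \infty$, the only difference being that the paper cites Pappalardi for the asymptotic $S^k(x) = x/\zeta(k) + O(x^{1/k})$ while you derive it yourself via the standard M\"obius sieve (and also note, correctly, that the cruder union bound $\underline{d}(Q_k) \geq 2 - \zeta(k)$ would suffice). All steps in your argument are sound.
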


\begin{proof}
From, for example, Pappalardi \cite{Pap} we know that 
\be
S^k(x) \ := \ \#\{n \leq x \mid n \text{ is } k\text{-free}\} \ = \ \frac{x}{\zeta(k)}+O(x^{\frac{1}{k}}),
\ee
where $\zeta$ is the Riemann zeta function. Thus we have
\begin{equation} \lim_{k \to \infty} d(Q_k) \ = \ \lim_{k \to \infty} \frac{1}{\zeta(k)} \ = \ 1. 
\end{equation}
\end{proof}

\begin{lemma}\label{lem:bigbutnotfull}
Let $B_m$ be the set of positive integers whose prime factorizations have at least one prime raised to the $m$\textsuperscript{th} power. Then $d(B_m) > 0$ for each $m \geq 2$.
\end{lemma}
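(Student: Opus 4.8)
The plan is to avoid computing $d(B_m)$ exactly and instead bound it from below by exhibiting an explicit subset of $B_m$ of positive density. Since $B_m$ collects \emph{all} integers having some prime raised to the $m$th power, it suffices to look at a single, conveniently chosen prime; the smallest prime $2$ works best, and the resulting subset will in fact sit inside $B_m$ regardless of whether ``raised to the $m$th power'' is read as exponent exactly $m$ or exponent at least $m$.

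First I would consider the set $\{n \in \N : v_2(n) = m\}$ of integers exactly divisible by $2^m$. Writing $n = 2^m u$ with $u$ odd shows that this set is precisely the single residue class $n \equiv 2^m \pmod{2^{m+1}}$: indeed $n \equiv 2^m \pmod{2^{m+1}}$ forces $n = 2^m(2t+1)$, and conversely. In any interval of length $2^{m+1}$ exactly one integer lies in this class, so the set has density $1/2^{m+1}$. Every such $n$ exhibits the prime $2$ raised to the $m$th power, so this residue class is contained in $B_m$. Consequently $\underline{d}(B_m) \ge 1/2^{m+1} > 0$, which gives $d(B_m) > 0$ for every $m \ge 2$ (in fact for every $m \ge 1$).

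The main point is that there is essentially no obstacle to \emph{positivity}: the only subtlety is that $B_m$ is a union over primes of sets $\{n : v_p(n) = m\}$, which are not pairwise disjoint, so a direct evaluation of $d(B_m)$ would require an inclusion--exclusion over all primes. That computation is exactly what I sidestep by restricting to $p = 2$. If one instead wanted the exact value under the reading that $B_m$ consists of integers divisible by some $p^m$, one would pass to the complement, which is precisely the set $Q_m$ of $m$-free numbers; then Lemma~\ref{lem:dkfreeto1} gives $d(B_m) = 1 - 1/\zeta(m)$, again positive for $m \ge 2$. This alternative route packages the inclusion--exclusion into the density $1/\zeta(m)$ of the $m$-free numbers, but it is unnecessary for the stated claim, so I would present only the one-prime lower bound.
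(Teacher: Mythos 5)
Your proof is correct, but it takes a genuinely different route from the paper's. The paper evaluates the density outright: it rewrites $B_m$ as $Q_{m+1}\setminus Q_m$ (the $(m{+}1)$-free numbers that are not $m$-free) and invokes Lemma~\ref{lem:dkfreeto1}, via Pappalardi's count of $k$-free numbers, to get $d(B_m) = 1/\zeta(m+1) - 1/\zeta(m) > 0$. You instead exhibit a single explicit residue class, $n \equiv 2^m \pmod{2^{m+1}}$ (equivalently $v_2(n)=m$), which has density $2^{-(m+1)}$ and lies inside $B_m$ under either reading of ``raised to the $m$\textsuperscript{th} power,'' giving $\underline{d}(B_m) \ge 2^{-(m+1)} > 0$. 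Your approach is more elementary (no zeta values, no $k$-free asymptotics) and, as you note, immune to the exact-versus-at-least ambiguity; in fact your caution is vindicated by the paper itself, since under the exact-exponent reading the paper's ``rewriting'' is really only a containment $Q_{m+1}\setminus Q_m \subseteq B_m$ (e.g.\ $2^{m+1}3^m$ lies in $B_m$ but is not $(m{+}1)$-free), and under the divisibility reading the complement of $B_m$ is $Q_m$, so the exact density would be $1 - 1/\zeta(m)$; either way positivity survives. What the paper's route buys is an exact value and, with it, the existence of $d(B_m)$: strictly speaking your residue-class argument bounds only the lower density, so to assert the literal statement ``$d(B_m) > 0$'' you should either add your own complement remark ($d(B_m) = 1 - 1/\zeta(m)$ in the divisibility reading, via Lemma~\ref{lem:dkfreeto1}) to secure existence, or observe that positive lower density is all that the lemma's application in Theorem~\ref{uad} actually uses. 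Since your write-up already records the complement route, this is a cosmetic rather than substantive gap.
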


\begin{proof}
To compute the density, we rewrite $B_m$ as $Q_{m+1} \setminus Q_m$. Then, since $Q_m \subset Q_{m+1}$, we have
\be d(B_m) \ = \ d(Q_{m+1}) - d(Q_m) \ = \ \frac{1}{\zeta (m+1)} - \frac{1}{\zeta (m)} \ > \ 0,
\ee
for each $m \geq 2$, as desired.
\end{proof}

\begin{theorem} \label{uad}
The asymptotic density of $S_n$ approaches $1$ as $n$ goes to infinity, but there is no $n$ for which the density of $S_n$ equals $1$. 
\end{theorem}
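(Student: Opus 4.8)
The plan is to track, for each $n$, the least positive integer $t_n$ that is \emph{not} in $S_n$, and to show both that $t_n \to \infty$ and that $t_n \geq 2$ for all $n$; the two halves of the theorem then follow by sandwiching $S_{n+1}$ between a set of $k$-free numbers (via Lemma \ref{lem:dkfreeto1}) from below and using an excluded set of the form $B_{t_{n-1}}$ (via Lemma \ref{lem:bigbutnotfull}) to keep its density bounded away from $1$.

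First I would record the elementary structural facts about the holes $t_n$. By the definition of $t_n$ as the least positive integer missing from $S_n$, the entire initial segment $\{1, 2, \ldots, t_n - 1\}$ lies in $S_n$. Recalling that $m \in S_{n+1}$ exactly when every prime in the factorization of $m$ is raised to a power lying in $S_n$, I would argue that the smallest positive integer excluded from $S_{n+1}$ is $2^{t_n}$: any $m < 2^{t_n}$ has all its prime exponents at most $t_n - 1$, hence in $S_n$, so $m \in S_{n+1}$, whereas $2^{t_n}$ itself carries the exponent $t_n \notin S_n$. This yields the recursion $t_{n+1} = 2^{t_n}$, with base case $t_1 = 2$ (since $2 \notin A_3^*$), so that $(t_n)$ is a strictly increasing sequence of integers with $t_n \geq 2$ and $t_n \to \infty$.

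For the first assertion ($d(S_n) \to 1$) I would use the containment $Q_{t_n} \subseteq S_{n+1}$: if $m$ is $t_n$-free then each of its exponents lies in $\{1, \ldots, t_n - 1\} \subseteq S_n$, so $m \in S_{n+1}$. Hence $\underline{d}(S_{n+1}) \geq d(Q_{t_n}) = 1/\zeta(t_n)$. Since $t_n \to \infty$, Lemma \ref{lem:dkfreeto1} gives $1/\zeta(t_n) \to 1$, and as $\overline{d}(S_{n+1}) \leq 1$ trivially, both the lower and upper densities of $S_n$ are squeezed to $1$; in particular $d(S_n) \to 1$.

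For the second assertion (no $n$ achieves density $1$) I would exhibit a positive-density subset of the complement $\mathbb{N} \setminus S_n$. Because $t_{n-1}$ is a hole of $S_{n-1}$ and $t_{n-1} \geq 2$, every integer in $B_{t_{n-1}}$ — those having at least one prime raised exactly to the power $t_{n-1}$, with no exponent larger — carries an exponent $t_{n-1} \notin S_{n-1}$ and is therefore excluded from $S_n$. Thus $B_{t_{n-1}} \subseteq \mathbb{N} \setminus S_n$, and since $\overline{d}(S_n) = 1 - \underline{d}(\mathbb{N} \setminus S_n)$, Lemma \ref{lem:bigbutnotfull} gives $\overline{d}(S_n) \leq 1 - d(B_{t_{n-1}}) < 1$ for every $n \geq 2$; the remaining case $n = 1$ is immediate since $A_3^*$ has density $0$. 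The main obstacle, and the step deserving the most care, is the structural claim that the least element excluded from $S_{n+1}$ is exactly $2^{t_n}$ (equivalently, that each $S_n$ contains an unbroken initial block of integers), since everything downstream rests on knowing $t_n \geq 2$ and $t_n \to \infty$; once that is established, the density estimates are routine applications of the two lemmas.
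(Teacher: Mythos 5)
Your proposal is correct and takes essentially the same route as the paper: both halves rest on Lemmas \ref{lem:dkfreeto1} and \ref{lem:bigbutnotfull}, bounding $S_n$ from below by a set of $k$-free numbers $Q_k$ and exhibiting a positive-density excluded set $B_m$ for an exponent $m$ missing from $S_{n-1}$. Your explicit recursion $t_{n+1}=2^{t_n}$ for the least excluded element (with $t_1=2$) makes rigorous what the paper only asserts in passing (that the admissible $k$ tends to infinity with $n$), so your write-up is a slightly more detailed version of the same argument.
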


\begin{proof}
By the definition of $S_n$ for $n>1$, we have
\be d(S_n) = \prod_p\left(\frac{p-1}{p}\right)\sum_{i \in S_{n{-}1}} \frac{1}{p^i}.\ee

$S_n$ contains as a subset the $k$-free numbers for some $k$. As $n \to \infty$, $k \to \infty$ as well. By Lemma \ref{lem:dkfreeto1}, we know that as $k \to \infty$, the density of the $k$-free numbers approaches $1$. Thus, we get that $d(S_n) \to \infty$ as $n \to \infty$.

However, in each set, there exists some $m$ such that no numbers whose factorizations have a prime raised to the $m$\textsuperscript{th} power are included. The set of numbers with at least one prime raised to the $m$\textsuperscript{th} power has positive density by Lemma \ref{lem:bigbutnotfull}. Thus no set in this family has density $1$.
\end{proof}


Nevertheless, the sets $S_n$ increase in size very quickly. For example, in the fourth iteration of this family the first element that is excluded is $2^{2^{2^{2}}} = 65536$. The densities of $S_n$ for the first few values of $n$ are given in Table \ref{denstable}.

\large
\begin{center}
\begin{table}[h!]
\large
\begin{center}
\begin{tabular}{|c|c|}
\hline
\ $n$\ & \ $d(S_n)$\ \\
\hline
\ 1\ & 0 \\
\hline
2 & 0.719745 \\
\hline
3 & 0.957964 \\
\hline
4 & 0.999992  \\
\hline
\end{tabular}
\begin{center}\caption{Densities of the sets $S_n$.}
\label{denstable} \end{center}
\normalsize
\end{center}
\end{table}
\end{center}
\normalsize


Despite the high densities of these sets, they all still miss arbitrarily long sequences of consecutive integers.
\begin{theorem} \label{lud}
For each $n$, $S_n$ has lower uniform density 0.  
\end{theorem}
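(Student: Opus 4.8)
The plan is to prove the stronger statement that for every $n$ the complement $\N\setminus S_n$ contains arbitrarily long blocks of consecutive integers; this immediately forces $\underline{u}(S_n)=0$ from the definition \eqref{eq:loweruniformdensity}. The construction is a direct generalization of the Chinese Remainder Theorem argument already used for $G_3^*$ in \eqref{rankingaps}, where the operative fact was simply that $2\notin A_3^*=S_1$. First I would dispose of the case $n=1$ separately, since $S_1=A_3^*$ is not defined through prime exponents: here it suffices to note that $d(A_3^*)=0$ (its counting function up to $3^k$ is $2^k$), so by the chain \eqref{densityinequalities} we get $\underline{u}(S_1)\le\underline{d}(S_1)=d(S_1)=0$.

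For $n\ge 2$ the key observation is an exclusion criterion: if a prime $p$ divides an integer $k$ to exactly the $m$th power (that is, $p^m\parallel k$) and $m\notin S_{n-1}$, then $k\notin S_n$, because one of the exponents in the factorization of $k$ fails to lie in $S_{n-1}$. By Theorem \ref{uad} we have $d(S_{n-1})<1$, so $S_{n-1}\subsetneq\N$ and there exists an integer $m\ge 2$ with $m\notin S_{n-1}$; indeed the proof of Theorem \ref{uad} exhibits such a missing exponent explicitly. Fix such an $m$ (for instance $m=2$ when $n=2$, recovering the Rankin argument).

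Next, given any target length $L$, let $p_1,\dots,p_L$ be the first $L$ primes and use the Chinese Remainder Theorem to produce a positive integer $a$ satisfying
\[ a+(j-1)\ \equiv\ p_j^{\,m}\pmod{p_j^{\,m+1}}, \qquad j=1,\dots,L; \]
the moduli $p_j^{\,m+1}$ are pairwise coprime, so a solution exists and may be taken positive. Writing $a+(j-1)=p_j^{\,m}\bigl(1+t\,p_j\bigr)$ shows that $p_j^{\,m}\parallel a+(j-1)$, so by the exclusion criterion each of $a,a+1,\dots,a+L-1$ lies outside $S_n$. Thus $\N\setminus S_n$ contains a block of $L$ consecutive integers. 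Finally, for any window length $s$ I take $L=s$: the interval $(a-1,\,a-1+s]$ then meets $S_n$ in no point, so $\min_{w\ge 0}\frac{1}{s}\,|S_n\cap(w,w+s]|=0$ for every $s$, and letting $s\to\infty$ in \eqref{eq:loweruniformdensity} gives $\underline{u}(S_n)=0$.

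The only genuine subtlety will be the passage from congruence to \emph{exact} divisibility, guaranteeing $p_j^{\,m}\parallel a+(j-1)$ rather than merely $p_j^{\,m}\mid a+(j-1)$, together with the verification that a missing exponent $m\notin S_{n-1}$ is available for each $n$; both are immediate, the latter directly from Theorem \ref{uad}. Everything else is a routine generalization of the $G_3^*$ construction with the exponent $2$ replaced by $m$, so I do not expect a substantial obstacle.
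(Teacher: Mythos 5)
Your proposal is correct and follows essentially the same route as the paper's proof: pick an exponent $m \notin S_{n-1}$ and use the Chinese Remainder Theorem to solve $a + (j-1) \equiv p_j^{m} \pmod{p_j^{m+1}}$ for $j = 1, \dots, L$, producing arbitrarily long blocks of consecutive integers each exactly divisible by some $p_j^{m}$ and hence excluded from $S_n$. Your explicit verification of exact divisibility and your separate treatment of $n=1$ (which the paper's proof silently omits by fixing $n>1$) are minor tightenings of the same argument, not a different approach.
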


\begin{proof}
Fix $n>1$, and consider the set $S_n$.  Using the Chinese Remainder Theorem as in \ref{rankingaps} we can construct an arbitrarily long sequence of consecutive numbers all of which are excluded from $S_n$.

Let $m$ be a number excluded from $S_{n-1}$. Then any number with a prime raised to the $m$\textsuperscript{th} power in its prime factorization is excluded from $S_n$. We construct a list of $l$ numbers each of which is exactly divisible by some prime raised to the $m$\textsuperscript{th} power. Take the first $l$ primes, $p_1, \dots, p_{l}$ and consider the system of congruences

\begin{align}
a \ & \equiv \ p_1^{m} \pmod {p_{1}^{m+1}} \nonumber \\
a+1 \ & \equiv \ p_2^{m} \pmod {p_{2}^{m+1}} \nonumber \\
& \; \; \vdots \nonumber \\
a+l-1 \ & \equiv \ p_{l}^{m} \pmod {p_{l}^{m+1}}.
\end{align}

By the Chinese Remainder Theorem, there exists an $a$ that solves this system of congruences, and so the integers $a,a+1,\ldots a+l-1$ are all excluded from $S_n$.
\end{proof}

Note that an argument analogous to that of the proof of Theorem \ref{thm:uudrankin} would show that the upper uniform density of $S_n$ is equal to its asymptotic density.


\section{Conclusion and Future Work}

In addition to calculating the upper uniform density of Rankin's set, we have characterized the greedy set of integers avoiding 3-term exponential progressions, and analyzed it using the asymptotic, uniform and exponential densities.  We have also generalized the construction of the set $G_3^*$ and analyzed the densities of the resulting sets.


We end with some additional topics we hope to pursue in later work.


\begin{que}
What other functions $f_n(x)$ could we use to define interesting progression-free sets?  How does the resulting progression-free set depend on the function?
\end{que}

\begin{que}
Can the sets $S_n$ be characterized as being free of some form of progression or pattern?
\end{que}

\begin{que}
What other notions of density reveal meaningful information about the size of a progression-free set? The multiplicative density, defined by Davenport and Erd\H{o}s \cite{DE}, might be particularly interesting to consider. How does the use of different measures of density affect the structure of an optimal progression-free-set?
\end{que}

\begin{que}
One might consider a family of sets where the set after $E_3^*$ extends from $E_3^*$ analogously to how $E_3^*$ extends from $G_3^*$, that is, an integer $k = p_1^{a_1}p_2^{a_2} \cdots p_n^{a_n}$ is included in the $n$\textsuperscript{th} set if and only if $g=\gcd (a_1,a_2,\dots,a_n)$ is included in the $(n-1)$\textsuperscript{th} set. Can the sets after the first three in this family be characterized as avoiding some meaningful kind of progression?
\end{que}

\begin{que}
What about exponential-progression-free sets over Gaussian integers? Or other number fields? In particular, what can be said about the densities of the sets of ideals which avoid exponential progressions?
\end{que}



$ $

\end{document}